\documentclass[a4paper,12pt]{amsart}
\usepackage[colorlinks,linkcolor=blue,citecolor=blue]{hyperref}
\usepackage{latexsym, amssymb, amsmath, amscd, amsthm, mathrsfs, bbm, bm}
\usepackage{algorithm, algpseudocode,multirow}
\usepackage[all, knot]{xy}
\xyoption{arc}

\usepackage{anysize}\marginsize{22mm}{22mm}{25mm}{25mm}
\addtolength{\parskip}{6pt}
\allowdisplaybreaks[4]
\newcommand{\Biggg}{\bBigg@{1.5}}

\def \k {\mathbbm{k}}

\def \dim {\operatorname{dim}}

\numberwithin{equation}{section}
\numberwithin{table}{section}
\numberwithin{equation}{section}
\newtheorem{theorem}{Theorem}[section]

\newtheorem{definition}[theorem]{Definition}
\newtheorem{example}[theorem]{Example}
\newtheorem{remark}[theorem]{Remark}

\begin{document}

\title[Simultaneous direct sum decompositions of several multivariate polynomials]{Simultaneous direct sum decompositions of several multivariate polynomials}
\thanks{Supported by Key Program of Natural Science Foundation of Fujian Province (Grant no. 2024J02018) and National Natural Science Foundation of China (Grant no. 12371037).}

\subjclass[2020]{15A69, 13P05}

\keywords{multivariate polynomial, direct sum decomposition}

\author{Lishan Fang}
\address{School of Mathematical Sciences, Huaqiao University, Quanzhou 362021, China}
\email{fanglishan@hqu.edu.cn}

\author{Hua-Lin Huang}
\address{School of Mathematical Sciences, Huaqiao University, Quanzhou 362021, China}
\email{hualin.huang@hqu.edu.cn}

\author{Lili Liao}
\address{School of Mathematical Sciences, Huaqiao University, Quanzhou 362021, China}
\email{lili.liao@hqu.edu.cn}

\date{}                                           

\maketitle

\begin{abstract}  
We consider the problem of simultaneous direct sum decomposition of a set of multivariate polynomials. To this end, we extend Harrison's center theory for a single homogeneous polynomial to this broader setting. It is shown that the center of a set of polynomials is a special Jordan algebra, and simultaneous direct sum decompositions of the given polynomials are in bijection with complete sets of orthogonal idempotents of their center algebra. Several examples are provided to illustrate the performance of this method.
\end{abstract}

\vskip 20pt

\section{Introduction}\label{sec:intro}

\subsection{Problem statement}\label{sec:problem}
Throughout, $\k$ is a field and $\k[x_1, x_2, \dots, x_n]$ is the polynomial ring in $n$ variables. 
Let $f_i(x)=f_i(x_1, x_2, \dots, x_n) \in \k[x_1, x_2, \dots, x_n]$ for $1 \le i \le m$. We are concerned about the problem of whether there exists a change of variables $x=Py$ such that 
\begin{equation}\label{eqn:dsd}
    f_i(x)=f_i(Py)= g_{i1}(y_{1} , \ldots , y_{a_{1}}) + g_{i2} (y_{a_{1}+1} , \ldots , y_{a_{2}}) + \cdots + g_{it}(y_{a_{t-1}+1} , \ldots , y_n)
\end{equation}
for all $1 \le i \le m$ with $2 \le t \le n$. If this is the case, determine the appropriate change of variables $x=Py$. This is referred to as the simultaneous direct sum decomposition of several multivariate polynomials. In particular, if $t=n$, then $f_1, f_2, \ldots, f_m$ are simultaneously decomposed as sums of univariate polynomials and this is called the simultaneous diagonalization.
Simultaneous direct sum decompositions may simplify a set of multivariate polynomials synchronously by reducing dimensions and eliminating cross terms. This is of fundamental significance both in theory and in practice involving systems of multivariate polynomials. 

\subsection{Related work}\label{sec:review}
In recent years, the direct sum decomposition of a homogeneous multivariate polynomial has been considered in \cite{buczynska2015apolarity, fedorchuk2020direct, huang2021centres,  wang2015homogeneous} via various approaches. The diagonalization of a homogeneous polynomial was approached in \cite{huang2021diagonalizable, kayal2011efficient, koiran2023absolute, robeva2016orthogonal} and is related to the vast Waring's problem of polynomials. The simultaneous Waring decomposition for a set of homogeneous polynomials was investigated in \cite{carlini2003waring, tiels2013coupled}. In the series of papers \cite{decuyper2019decoupling, dreesen2018decoupling, dreesen2015decoupling}, the authors developed a method to simultaneously decompose a set of nonhomogeneous multivariate real polynomials into linear combinations of univariate polynomials in linear forms of the input variables by tensor decompositions. This is called the decoupling of multivariate polynomials, and the partial decoupling question is proposed as an open problem in \cite{dreesen2015decoupling}. 

The simultaneous direct sum decomposition problem studied in this paper is part of the partial decoupling question. When this problem is restricted to a single homogeneous polynomial, paper \cite{huang2021centres} offers a solution by Harrison's theory of centers \cite{harrison1975grothendieck}. Recall that the center of a homogeneous polynomial $f(x_1, x_2, \dots, x_n)$ can be defined as 
\begin{equation}\label{def:cshp}
        Z(f) := \{X \in \k^{n \times n} \mid (H_fX)^T=H_fX \},  
\end{equation} 
where $H_f$ is the Hessian matrix of $f$. The significance of the center $Z(f)$ of a homogeneous polynomial $f$ of degree at least $3$ is that it has a natural associative algebra structure and direct sum decompositions of 
$f$ are in bijection with complete sets of orthogonal idempotents of $Z(f).$ It also helps to provide both a criterion and an algorithm for the direct sum decomposition of a homogeneous polynomial. 

\subsection{Methods and results}\label{sec:method}
We extend the approach in \cite{huang2021diagonalizable, huang2021centres} to simultaneous direct sum decompositions of an arbitrary set of multivariate polynomials. Firstly, we notice that the center in Equation \eqref{def:cshp} for a single homogeneous polynomial can be naturally generalized to that for a set of not necessarily homogeneous polynomials of various degrees. 

\begin{definition} \label{def:center}
\emph{Let $f_i(x_1, x_2, \dots, x_n) \in \k[x_1, x_2, \dots, x_n]$ for $1 \le i \le m$ be as above. The center of this set of polynomials is defined as
\begin{equation}\label{eqn:center}
Z(f_1, f_2, \dots, f_m) := \{X \in \k^{n \times n} \mid (H_{f_i}X)^T=H_{f_i}X, \ 1 \le i \le m \}.
\end{equation}
}
\end{definition}

Secondly, we observe that $Z(f_1, f_2, \dots, f_m)$ has a natural special Jordan algebra structure and it can be applied to the simultaneous direct sum decompositions of $f_1, f_2, \dots, f_m$ just as the case of a single homogeneous polynomial.
\begin{theorem}\label{thm:intro}
Suppose $f_1, f_2,\ldots, f_m$ are a set of multivariate polynomials in $n$ variables and $Z(f_1, f_2, \dots, f_m)$ is their center. For any $X, Y \in Z(f_1, f_2, \dots, f_m)$, define $X \odot Y=\frac{1}{2}(XY+YX).$
\begin{itemize}
\item[(1)] $(Z(f_1, f_2, \dots, f_m), \odot)$ is a special Jordan algebra.
\item[(2)] There is a one-to-one correspondence between direct sum decompositions of $f_1,f_2,\ldots, f_m$ and complete sets of orthogonal idempotents of $Z(f_1,f_2,\ldots, f_m)$.
\item[(3)] The polynomials $f_1,f_2,\ldots, f_m$ are simultaneously diagonalizable if and only if the center algebra $Z(f_1, f_2, \dots, f_m)$ has a complete set of $n$ orthogonal idempotents.
\end{itemize}
\end{theorem}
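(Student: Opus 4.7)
The plan for part (1) is to first rewrite the defining condition as $X\in Z(f_i)\iff X^{T}H_{f_i}=H_{f_i}X$, viewed as an identity of matrices whose entries are polynomials in $x_1,\ldots,x_n$. Given any $X,Y$ in $Z(f_i)$, a direct computation gives
\[
(XY+YX)^{T}H_{f_i}=Y^{T}X^{T}H_{f_i}+X^{T}Y^{T}H_{f_i}=Y^{T}H_{f_i}X+X^{T}H_{f_i}Y=H_{f_i}(YX+XY),
\]
so $X\odot Y\in Z(f_i)$ and hence in $Z(f_1,\ldots,f_m)$. The Jordan identity itself is then automatic, since $\odot$ is the anticommutator in the associative matrix algebra $M_n(\k)$, and every $\odot$-closed subspace of $M_n(\k)$ is by definition a special Jordan algebra.

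For part (2), I would set up the bijection in both directions explicitly. Given a decomposition $f_i(Py)=\sum_{j=1}^{t}g_{ij}$ as in \eqref{eqn:dsd}, I partition $y_1,\ldots,y_n$ into the $t$ indicated blocks, let $E_j\in\k^{n\times n}$ denote the coordinate projector onto the $j$-th block, and define $e_j:=PE_jP^{-1}$. The chain rule yields the transformation law $H_{g_i}(y)=P^{T}H_{f_i}(Py)P$, whose right-hand side is block-diagonal with the prescribed block sizes. Because each $E_j$ commutes with every block-diagonal matrix of that shape, the conjugated relation $e_j^{T}H_{f_i}=H_{f_i}e_j$ holds, so $\{e_1,\ldots,e_t\}$ is a complete set of orthogonal idempotents in $Z(f_1,\ldots,f_m)$. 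Conversely, given such a system, I would choose a basis of $\k^{n}$ adapted to the splitting $\k^{n}=\bigoplus_{j}\Im(e_j)$ to obtain $P$ with $P^{-1}e_jP=E_j$; the same transformation law now runs in reverse and converts $e_j^{T}H_{f_i}=H_{f_i}e_j$ into $E_j^{T}H_{g_i}(y)=H_{g_i}(y)E_j$, forcing every $H_{g_i}(y)$ to be block-diagonal.

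The step I expect to be most delicate is the final one: extracting a decomposition of the polynomial $g_i:=f_i(Py)$ itself from the block-diagonality of $H_{g_i}(y)$. The vanishing $\partial_k\partial_l g_i=0$ for $k,l$ in different blocks says that each partial $\partial_k g_i$ depends only on the variables in the block containing $k$, and iterated integration (a polynomial Poincar\'e-style argument) then delivers $g_i=\sum_{j=1}^{t}g_{ij}+c_i$ for some scalar $c_i$, which can be absorbed into any one summand to produce a genuine decomposition of the form \eqref{eqn:dsd}. A small caveat is that the $f_i$ are not required to be homogeneous, so the linear parts of $g_i$ must be distributed among the blocks, but this happens automatically in the integration. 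Part (3) is then immediate from (2): a complete set of $n$ orthogonal idempotents in $Z(f_1,\ldots,f_m)$ must consist of rank-one projectors (nonzero idempotents have rank $\geq 1$ and $\sum_j\rk(e_j)=n$), which is precisely the case $t=n$ of \eqref{eqn:dsd}, namely simultaneous diagonalization.
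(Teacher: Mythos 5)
Your proposal follows essentially the same route as the paper's proof: the same closure computation for part (1), the same transformation law $H_{g_i}(y)=P^{T}H_{f_i}(Py)P$ and its consequence $Z(g_1,\ldots,g_m)=P^{-1}Z(f_1,\ldots,f_m)P$, and the same pairing of block coordinate projectors $E_j$ with idempotents $PE_jP^{-1}$ in both directions of part (2). You actually spell out more explicitly than the paper the final integration step recovering $g_i=\sum_j g_{ij}$ from block-diagonality of $H_{g_i}$, which is a helpful elaboration.

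There is, however, one small but genuine gap in the backward direction of (2). You start from a complete set of orthogonal idempotents $e_1,\ldots,e_t$ in the Jordan algebra $Z(f_1,\ldots,f_m)$, where orthogonality means $e_i\odot e_j=\tfrac12(e_ie_j+e_je_i)=0$, and then assert a splitting $\k^{n}=\bigoplus_{j}\Im(e_j)$ and a conjugating matrix $P$ with $P^{-1}e_jP=E_j$. That splitting, and the fact that each $e_i$ annihilates $\Im(e_j)$ for $i\neq j$, actually require the \emph{associative} orthogonality $e_ie_j=0$, not just the symmetrized one. This implication is true but needs an argument: conjugate so that $e_i$ is the standard rank-$r$ projector $\begin{pmatrix}I&0\\0&0\end{pmatrix}$, write $e_j$ in conformal block form, and the equation $e_ie_j+e_je_i=0$ forces the three nonzero-blocks to vanish, giving $e_ie_j=0$. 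The paper isolates exactly this point as a separate remark before the theorem; your proof should include it (or cite it) before invoking the eigenspace decomposition.
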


Finally, we also provide an algorithm for simultaneous direct sum decompositions of any finite set of multivariate polynomials. This is usually boiled down to some standard tasks in linear algebra.

\subsection{Organization of the paper}\label{sec:organize}

The remainder of this paper is organized as follows.
Section~\ref{sec:center} describes the center theory for a set of multivariate polynomials.
Section~\ref{sec:sdsd} shows the connections between centers and simultaneous direct sum decompositions. It also provides a simple algorithm.
Section~\ref{sec:example} presents more examples to illustrate this technique.

\section{The center of a set of multivariate polynomials}\label{sec:center}

\subsection{Some examples}
We begin with some explicit examples. 

\begin{example}\label{ex:center_single}
\emph{Let $f(x, y, z)=x^{4}+y^{2}+z^{2}$. Then the Hessian matrix of $f(x, y, z)$ is
\begin{align*}
		H =\left (
		\begin{array}{ccc}
			12x^{2}&0&0\\
			0&2&0\\
			0&0&2
		\end{array}
		\right).
\end{align*}
	According to Definition \ref{def:center}, the center $Z(f)$ consists of the following matrices
\begin{align*}
\left(
\begin{array}{ccc}
 a & 0  &  0 \\
0  &  b &  c \\
0  &  c &   d
\end{array}
\right) 	
\end{align*}
for all $a, b, c, d \in \k.$}
\end{example}

\begin{example}\label{ex:center_multiple}
\emph{Let $f_1(u_1, u_2)=54u^3_1 - 54u^2_1u_2 + 8u^2_1 + 18u_1u^2_2 + 16u_1u_2 - 2u^3_2 + 8u_2^2 + 8u_2 + 1$ and $f_2(u_1, u_2)=-27u^3_1 + 27u^2_1u_2 - 24u^2_1 - 9u_1u^2_2 - 48u_1u_2 - 15u_1 + u^3_2 - 24u_2^2 - 19u_2 - 3.$ 
The Hessian matrices of $f_{1}$ and $f_{2}$ are
\begin{align*}
    H_{f_1}=\left(
    \begin{array}{cc}
    		324u_{1}-108u_{2}+16 & -108u_{1}+36u_{2}+16 \\
    		-108u_{1}+36u_{2}+16 & 36u_{1}-12u_{2}+16 \\
    	\end{array}
    	\right),
     \end{align*}
\begin{align*}
   H_{f_2}=\left(
    	\begin{array}{cc}
    		-162u_{1}+54u_{2}-48 & 54u_{1}-18u_{2}-48\\
    		54u_{1}-18u_{2}-48& -18u_{1}+6u_{2}-48\\
    	\end{array}
    	\right),
\end{align*}
respectively. Then by a direct computation, one has 
\begin{align*}
Z(f_1, f_2)=\left\{ 
\left(
\begin{array}{cc}
  a   &  \frac{1}{2}(b-a) \\
  \frac{3}{2}(b-a)   &    b
\end{array}
\right)
\Big|
a, b \in \k
 \right\}.
\end{align*}}
\end{example}

In Example \ref{ex:center_multiple}, the center is a commutative associative subalgebra of the full matrix algebra. While in Example \ref{ex:center_single}, the center is not even closed under the usual matrix multiplication. Therefore, we need to consider on the center of a set of polynomials a new algebraic structure which contains that for a single homogeneous polynomial of degree $>2$ as a special case. It turns out that Jordan algebras will do the job.  
 
\subsection{Jordan algebras}
For the convenience of the reader, we recall the definition of Jordan algebras, see e.g. \cite{mccrimmon2003taste} for more detail. 
\begin{definition}\label{def:jordan}
\emph{A \textit{Jordan algebra} consists of a vector space $J$ equipped with a bilinear product $X \odot Y$ satisfying the commutative law and the Jordan identity:
\begin{equation}
    X \odot Y =Y \odot X,  \quad    (X^{2} \odot Y) \odot X =X^{2}\odot (Y \odot X).
\end{equation}
Any associative algebra $A$ gives rise to a Jordan algebra $A^{+}$ under symmetrized product 
\begin{equation*}
X \odot Y=\frac{1}{2}(XY+YX).
\end{equation*}
A Jordan algebra is said to be \textit{special} if it can be realized as a Jordan subalgebra of some $A^+$.
An element $e \in J$ is called an idempotent if $e \odot e=e.$ Two idempotents $e_1$ and $e_2$ are said to be orthogonal, if $e_1 \odot e_2=0.$ We call $e_1, e_2, \dots, e_s$ a complete set of orthogonal idempotents in a unital Jordan algebra $J$ if 
\begin{equation*}
e_i \odot e_i =e_i, \forall i, \quad e_i \odot e_j=0, \forall i \ne j, \quad \sum_{i=1}^s e_i =1.
\end{equation*}
}
\end{definition}

\begin{remark}\label{rem:idempotent}
\emph{In a unital Jordan subalgebra $J$ of $({\k^{n \times n}})^+,$ if $e_1, e_2, \dots, e_s$ are a complete set of orthogonal idempotents, then we have
\begin{equation*}
e_i^2=e_i, \forall i, \quad e_ie_j=0, \forall i \ne j, \quad \sum_{i=1}^se_i=I_n
\end{equation*}
in the usual operations of matrices. Only $e_ie_j=0$ needs an explanation. Since $e_i^2=e_i,$ there exists an invertible matrix $P$ such that $P^{-1}e_iP=\begin{pmatrix}
    I & 0 \\
    0 & 0
\end{pmatrix}.$ Take the conformal block form of $P^{-1}e_jP$ as $\begin{pmatrix}
    X & Y \\
    Z & W
\end{pmatrix}.$ Then by definition $e_i \odot e_j=\frac{1}{2}(e_ie_j+e_je_i)=0,$ from this we can easily deduce $X=0, Y=0, Z=0.$ This implies $e_i e_j=0.$      }
\end{remark}

\subsection{Main properties of centers} Now we are ready to state the main structural result for center algebras.
\begin{theorem}\label{thm:property}
Let $Z(f_1, f_2, \ldots, f_m)$ be the center of $f_1, f_2, \ldots, f_m \in \k[x_1, x_2, \dots, x_n]$.
\begin{itemize}
\item[(1)] $(Z(f_1, f_2, \ldots, f_m), \odot)$ is a special Jordan algebra.
\item[(2)] If the coefficients of $f_1, f_2, \ldots, f_m$ are generic and at least one of the $f_i$ has degree not less than $3,$ then $(Z(f_1, f_2, \ldots, f_m), \odot) \cong \k$. 
\end{itemize}
\end{theorem}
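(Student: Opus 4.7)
For part (1), the plan is a direct closure computation. Using the symmetry $H_{f_i}^T = H_{f_i}$, I would rewrite the defining condition $(H_{f_i}X)^T = H_{f_i}X$ equivalently as $X^T H_{f_i} = H_{f_i} X$. For any $X, Y \in Z(f_1,\ldots,f_m)$, compute $(H_{f_i}XY)^T = Y^T X^T H_{f_i} = Y^T H_{f_i} X = H_{f_i} Y X$, and symmetrically $(H_{f_i}YX)^T = H_{f_i}XY$. Adding these shows that $H_{f_i}(XY + YX)$ is symmetric, so $XY + YX \in Z(f_1,\ldots,f_m)$. Since $Z$ is visibly a linear subspace of $\k^{n \times n}$ closed under the symmetrized product, it is a Jordan subalgebra of $(\k^{n \times n})^+$, and hence a special Jordan algebra.

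For part (2), my strategy is upper semicontinuity. Expanding the polynomial matrix identity $(H_{f_i}X)^T = H_{f_i}X$ monomial-by-monomial produces finitely many linear equations on the entries of $X$, and their coefficient matrix depends linearly on the coefficients of $(f_1,\ldots,f_m)$. Thus $Z(f_1,\ldots,f_m)$ is the kernel of such a matrix, so $\dim Z$ is upper semicontinuous in the parameters. Since $\k\cdot I_n \subseteq Z$ for every tuple, $\dim Z \geq 1$ holds identically, and it suffices to exhibit one tuple with $\dim Z = 1$ in order to conclude that equality holds on a Zariski dense open subset of the parameter space.

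To produce such a tuple I would reduce to a single polynomial via $Z(f_1,\ldots,f_m) = \bigcap_i Z(f_i)$, so that it is enough to have $Z(f_j) = \k\cdot I_n$ for one $f_j$ of degree $\geq 3$, with the remaining $f_i$ chosen freely. Comparing top-degree coefficients of $X^T H_{f_j} = H_{f_j}X$ further gives $Z(f_j) \subseteq Z(f_j^{(d)})$, where $f_j^{(d)}$ denotes the top-degree homogeneous part of $f_j$, so the question reduces to the generic single-homogeneous case treated in \cite{huang2021centres}. The main obstacle is therefore the existence of at least one homogeneous polynomial of degree $\geq 3$ with trivial center; the cleanest route is to cite \cite{huang2021centres}, though it can also be checked by hand on a concrete example such as a generic binary quartic, after which the semicontinuity argument above automatically upgrades the single example to the generic statement.
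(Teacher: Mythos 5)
Your proof of part (1) is essentially the same as the paper's: both use symmetry of the Hessian to show closure of the center under the symmetrized product, and the computation you give is correct.

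For part (2), your upper semicontinuity framing is clean and is a genuine (if mild) structural improvement over the paper, which just intersects centers of the $f_i$'s and argues generically case by case. However, your reduction to the top-degree homogeneous part has a real gap. You propose to use $Z(f_j)\subseteq Z(f_j^{(d)})$ and then cite \cite{huang2021centres} to get $Z(f_j^{(d)})=\k$. This works when $n\ge 3$ or $d\ge 4$, but it fails when $n=2$ and $d=3$: a generic binary cubic \emph{form} is a sum of two cubes of independent linear forms (Sylvester), so its center is $2$-dimensional, never $\k$. Concretely, for $f^{(3)}=Ax_1^3+3Bx_1^2x_2+3Cx_1x_2^2+Dx_2^3$, the condition $X^TH=HX$ imposes only two linear equations on the three off-scalar coordinates of $X$, leaving a one-parameter family beyond $\k\cdot I_2$. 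The inclusion $Z(f_j)\subseteq Z(f_j^{(d)})$ therefore cannot force $Z(f_j)=\k$ here. Your fallback of checking a ``generic binary quartic'' by hand does not rescue the argument either, because the semicontinuity witness must live in the \emph{same} fixed-degree parameter space as the given tuple; if the hypothesis provides only binary cubics, you cannot swap in a quartic. The paper closes exactly this gap by observing that $Z(f_j)=\bigcap_{l} Z(f_j^{(l)})$ and that the degree-$2$ component $f_j^{(2)}$ contributes a third linear equation, which for generic coefficients cuts the center down to scalars. To repair your proof, you should replace the reduction to the top-degree part by the full degree-graded intersection and treat the $(n,d)=(2,3)$ case by including the quadratic part, as the paper does.
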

\begin{proof}
(1) For any $X, Y \in Z(f_1, f_2, \ldots, f_m)$, we have $X \odot Y=\frac{1}{2}(XY+YX)$. Then it follows that
\begin{equation*}
    H_{f_i}(X \odot Y) = H_{f_i}\frac{(XY + YX)}{2} = \frac{(Y^TX^T + X^TY^T)}{2}H_{f_i} = [H_{f_i}(X \odot Y)]^T,
\end{equation*}
for $1\le i \le m$. 
In other words, $(Z(f_1, f_2, \ldots, f_m), \odot)$ is closed under $\odot$ and hence a Jordan subalgebra of ${(\k^{n \times n})}^+$.

(2) Without loss of generality, we assume $f_1$ has at least two variables of degree $d \ge 3$. If $f_1$ has more than two variables or has degree $d>3$, then a generic $f_1=\sum_{l=0}^d f_1^{(l)}$ has nondegenerate degree $d$ homogeneous component $f_1^{(d)}$. By \cite[Theorem 3.2]{huang2021centres}, $Z(f_1) \cong \k$ for a generic $f_1$ as it is clear by Definition \ref{def:center} that $Z(f_1)=\cap_{l=0}^d Z(f_1^{(l)}).$ If $f_1$ is a binary polynomial of degree $3,$ say 
\begin{equation*}
		f_1(x_1, x_2) = Ax_1^{3} + 3Bx_1^{2}x_2 + 3Cx_1x_2^{2} + Dx_2^{3} + 3Ex_1^{2} +6Fx_1x_2 + 3Gx_2^{2} + 3Hx_1 + 3Ix_2 + K ,
\end{equation*}
	where $A,B,C,D,E,F,G,H,I,K \in \k,$ then the center $Z(f_1)$ is the solution of the following linear equations
\begin{align*}
		\left\{
		\begin{array}{ccccc}
			Az_{12}& + &B(z_{22} - z_{11})& - &Cz_{21}=0, \\
			Bz_{12}& + &C(z_{22} - z_{11})& + &Dz_{21}=0, \\      
			Ez_{12}& + &F(z_{22} - z_{11})& + &Gz_{21}=0.
		\end{array}
		\right .
\end{align*}
So for generic $A,B,C,D,E,F,G,$ the previous linear equations have trivial solutions, namely, $Z(f_1)$ consists of scalar matrices. That is, $Z(f_1) \cong \k.$ It is obvious that $Z(f_1, f_2, \dots, f_m)=\cap_{i=1}^m Z(f_i) \cong \k$ for a general set of multivariate polynomials $f_1, f_2, \ldots, f_m$ with the prescribed conditions. 
\end{proof}

\begin{remark}
\emph{ The adjective “generic” in the statement of Theorem \ref{thm:property} is to be understood in the usual sense of algebraic geometry, that is, there exists a polynomial in the coefficients of $f_1, f_2, \dots, f_m$ such that the conclusion holds for all polynomials $f_1, f_2, \dots, f_m$ at which that polynomial does not vanish. In particular, it will only fail on a set of instances that has measure zero. }  
\end{remark}

\section{Simultaneous direct sum decompositions of polynomials}\label{sec:sdsd}




\subsection{Center and simultaneous direct sum decompositions}\label{sec:center_decomp}

For a set of multivariate polynomials, we apply their center algebra to simultaneously decompose them as a sum of polynomials in disjoint sets of variables.

\begin{theorem}\label{thm:center}
Suppose $f_1, f_2, \ldots, f_m \in \k[x_1, x_2, \dots, x_n].$ Then
\begin{itemize}\label{thm:idm}
    \item[{(1)}] There is a one-to-one correspondence between simultaneous direct sum decompositions of $f_1$, $f_2$, $\ldots$, $f_m$ and complete sets of orthogonal idempotents of $Z(f_1, f_2, \ldots, f_m)$.
    \item[{(2)}] $f_1, f_2, \ldots, f_m$ are not simultaneously decomposable if and only if $Z(f_1$, $f_2$, $\ldots$, $f_m)$ has no nontrivial idempotent elements.
    \item[{(3)}] $f_1, f_2, \ldots, f_m$ are simultaneously diagonalizable if and only if $Z(f_1, f_2, \ldots, f_m)$ has a complete set of $n$ orthogonal idempotents.
    \item[{(4)}] If the coefficients of $f_1, f_2, \ldots, f_m$ are generic and at least one of the $f_i$ has degree not less than $3,$ then $f_1, f_2, \ldots, f_m$ are not simultaneously decomposable. 
\end{itemize}
\end{theorem}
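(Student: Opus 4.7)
The plan is to prove (1) first as the central statement; (2) and (3) are direct specializations, and (4) follows from (2) combined with Theorem 2.6. The key observation underlying (1) is covariance of the center construction under linear substitution. If $x = Py$ and $g_i(y) = f_i(Py)$, the chain rule gives $H_{g_i}(y) = P^T H_{f_i}(Py) P$, and a short manipulation of the defining symmetry $(H_{f_i} X)^T = H_{f_i} X$ shows
\[
Z(g_1, \ldots, g_m) = P^{-1} Z(f_1, \ldots, f_m) P.
\]
This lets me translate the decomposition problem for the $f_i$ into a problem about $g_i$ in adapted coordinates.

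For the forward direction of (1), I would start from a decomposition $g_i = \sum_{j=1}^{t} g_{ij}(y_{a_{j-1}+1}, \ldots, y_{a_j})$ and observe that each Hessian $H_{g_i}$ is block diagonal with block sizes $a_1, a_2 - a_1, \ldots, n - a_{t-1}$. The block projections $E_1, \ldots, E_t$ (the identity on the $j$-th block, zero elsewhere) then satisfy the center condition because $H_{g_i} E_j$ is the $j$-th diagonal block of $H_{g_i}$ padded with zeros, hence symmetric; these $E_j$ plainly form a complete set of orthogonal idempotents in $Z(g_1, \ldots, g_m)$, and conjugation by $P$ transports them to $Z(f_1, \ldots, f_m)$.

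Conversely, starting from a complete set $e_1, \ldots, e_s$ of orthogonal idempotents in $Z(f_1, \ldots, f_m)$, Remark 2.5 ensures the $e_j$ are ordinary orthogonal matrix idempotents summing to $I_n$, so $\k^n = \bigoplus_j \Im(e_j)$. Choosing a basis adapted to this decomposition produces a matrix $P$ with $P^{-1} e_j P = E_j$ the block projection, and by covariance each $E_j \in Z(g_1, \ldots, g_m)$. Writing $H_{g_i}$ in block form and using $(H_{g_i} E_j)^T = H_{g_i} E_j$ for every $j$ forces all off-diagonal blocks to vanish. The main obstacle, and the only nontrivial step, is passing from $H_{g_i}$ being block diagonal to an actual polynomial decomposition, since the Hessian records only second derivatives and is a priori blind to linear and constant terms. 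I would argue monomial by monomial: any monomial $c y^\alpha$ of $g_i$ whose support meets two distinct blocks at variables $y_k, y_l$ with $\alpha_k, \alpha_l \geq 1$ contributes $c\alpha_k\alpha_l y^{\alpha - e_k - e_l}$ to $\partial^2 g_i/\partial y_k \partial y_l$, and distinct multi-indices $\alpha$ yield distinct reduced monomials, so no cancellation can occur; the vanishing of this mixed partial thus forces all such monomials to be absent. The remaining monomials have support inside a single block, and the constant term can be absorbed arbitrarily into one of the $g_{ij}$. A direct check then verifies that the two constructions are mutually inverse.

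Once (1) is proved, (2) and (3) follow immediately: nontrivial decomposability ($t \geq 2$) corresponds to a complete idempotent set with $s \geq 2$, equivalently to the existence of a nontrivial idempotent in the Jordan algebra, while simultaneous diagonalizability ($t = n$) corresponds to a complete set of $n$ orthogonal idempotents. For (4), Theorem 2.6(2) gives $Z(f_1, \ldots, f_m) \cong \k$ under the stated genericity and degree hypotheses; since the scalar algebra admits only the trivial idempotents $0$ and $1$, statement (2) then precludes any nontrivial simultaneous decomposition.
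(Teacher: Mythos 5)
Your proposal is correct and follows essentially the same route as the paper: covariance $Z(g_1,\ldots,g_m)=P^{-1}Z(f_1,\ldots,f_m)P$, block projections $E_j$ in the forward direction, simultaneous diagonalization of commuting idempotents and the resulting block-diagonality of the $G_i$ in the converse, and items (2)--(4) as corollaries (with (4) via Theorem~\ref{thm:property}). The only divergence is that you explicitly justify the final step---that a block-diagonal Hessian forces the polynomial to split into a sum over disjoint variable blocks---via the injective monomial-shift argument, a detail the paper states without proof; this is a welcome tightening rather than a different method.
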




\begin{proof}
We include a proof of item (1) below, which clearly implies items (2) and (3), and (4) with the help of Theorem~\ref{thm:property}. As preparation, firstly we consider how the center varies with variable substitution. Take the change of variables $x = Py$ and denote $g_i(y)=f_i(Py)$ for $1 \le i \le m$. Let $H_i$ and $G_i$ denote the Hessian matrices of $f_i$ and $g_i$ with respect to the variables $x$ and $y$, respectively. It is easy to see that they satisfy $G_i=P^T H_i P$. Since $(G_iY)^T=G_iY$ for any $Y \in Z(g_1,g_2,\ldots, g_m)$, we have $(G_iY)^T = Y^{T}P^{T}H_iP=P^{T}H_iPY$, which implies that $PYP^{-1} \in Z(f_1,f_2,\ldots, f_m)$. It follows that $Z(g_1,g_2,\ldots, g_m)=P^{-1}Z(f_1,f_2,\ldots, f_m)P.$ 


Suppose there exists a simultaneous direct sum decomposition of $f_1$, $f_2$, $\ldots$, $f_m$ given by $g_i(y):=f_i(Py) = g_{i1}(y_{1} , \ldots , y_{a_{1}}) + g_{i2} (y_{a_{1}+1} , \ldots , y_{a_{2}}) + \cdots + g_{it}(y_{a_{t-1}+1} , \ldots , y_{n})$ for $1 \le i \le m$. It is obvious that $H_{g_i}$ is block-diagonal, written as $H_{g_{i1}}\oplus H_{g_{i2}}\oplus \cdots \oplus H_{g_{it}}$. We then have $Z(g_1, g_2, \ldots, g_m) = Z(g_{11}, g_{21}, \ldots, g_{m1}) \oplus Z(g_{12}, g_{22}, \ldots, g_{m2}) \oplus \cdots \oplus Z(g_{1t}, g_{2t}, \ldots, g_{mt})$ by Equation~\eqref{eqn:center}. It is clear that the identity matrix $I_j$ of $Z(g_{1j}, g_{2j}, \ldots, g_{mj})$ corresponds to an idempotent $E_j$ of $Z(g_1, g_2, \dots, g_m),$ where $ E_{j}$ takes the form of a block matrix with $I_j$ on the diagonal and 0 elsewhere
\begin{align*}
			E_{j}=	\left (
			\begin{array}{cccc}
				\bm{0}&&\\
				& I_j &\\
				& & \bm{0}\\
			\end{array}
			\right)
\end{align*}
for $1 \le j \le t$. Note that $Z(f_1, f_2, \dots, f_m)=PZ(g_1, g_2, \dots, g_m)P^{-1},$ so $PE_1P^{-1}, PE_2P^{-1}, \ldots,$ $ PE_mP^{-1}$ are a complete set of orthogonal idempotents of $Z(f_1, f_2, \ldots, f_m)$.


Conversely, suppose there exists a complete set of orthogonal idempotents $\epsilon_1, \epsilon_2, \ldots, \epsilon_t$ in $Z(f_1, f_2, \ldots, f_m).$ By Remark \ref{rem:idempotent}, we have $\epsilon_i^2=\epsilon_i$ for all $i$ and $\epsilon_i\epsilon_j=0$ for all $i \ne j$ as matrices. Since the $\epsilon_i$'s are diagonalizable matrices and they mutually commute, $\epsilon_1, \epsilon_2, \ldots ,\epsilon_t$ are simultaneously diagoanalizable. 
Thus, there exists some invertible matrix $P \in \k^{n \times n}$ such that $P^{-1}\epsilon_j P=E_j$ as above and $\sum_{j=1 }^{t}E_{j}=I_n$.
Take the change of variables $x=Py$ and let $g_i$ and $G_i$ be as above. We now have $G_iE_j=(G_iE_j)^T=E_j^TG_{i}^T=E_jG_i$ for $1\le i \le m$ and $1\le j \le t$. It follows that the $G_i$'s are quasi-diagonal  and $g_1,g_2,\ldots, g_m$ can be written as sums of polynomials in disjoint sets of variables
\begin{equation*}
    g_i(y_1, y_2, \ldots, y_n) = g_{i1}(y_{1} , \ldots , y_{a_{1}}) + g_{i2} (y_{a_{1}+1} , \ldots , y_{a_{2}}) + \cdots + g_{it}(y_{a_{t-1}+1} , \ldots , y_n)
\end{equation*}
for all $1 \le i \le m$. Therefore, there is a simultaneous direct sum decomposition of $f_1, f_2, \ldots, f_m$ corresponding to the complete set of orthogonal idempotents $\epsilon_1, \epsilon_2, \ldots, \epsilon_t$.
\end{proof}

\subsection{Algorithm for simultaneous direct sum decompositions}\label{sec:decomp_alg}
According to the proof of Theorem \ref{thm:center},
we provide an algorithm for the simultaneous direct sum decomposition of a set of multivariate polynomials in Algorithm~\ref{alg:sdsd}. This algorithm contains three steps, which are compute center, compute idempotents and decompose polynomials. It decomposes all given polynomials into two components in disjoint sets of variables, which are output as two sets of polynomials. Since these polynomials may still be simultaneously decomposable, they will be used as input polynomials of Algorithm~\ref{alg:sdsd} and decomposed recursively until output polynomials are not simultaneously decomposable.


\begin{algorithm}
    \caption{Simultaneous direct sum decomposition of multivariate polynomials}\label{alg:sdsd}
    \begin{algorithmic} [1]
    \Statex \textbf{Input}: polynomials~$f_1, f_2, \ldots, f_m$
    \Statex \textbf{Output}: decomposed polynomials $g_{11}, \ldots, g_{m1}$ and $g_{12}, \ldots, g_{m2}$
    \State Compute center. Solve system $(H_{f_i}X)^T=H_{f_i}X$ for $1 \le i \le m$ to compute the center~$Z(f_1, f_2,\ldots, f_m)$ and obtain base matrices $X_{j}$ for $1 \le j \le \dim{Z(f_1, f_2,\ldots, f_m)}$. 
    \State Compute idempotents. Compute orthogonal idempotents~$\epsilon$ using $Z(f_1, f_2, \ldots, f_m)$. If a nontrivial $\epsilon$ does not exist, $f_1, f_2, \ldots, f_m$ are simultaneously indecomposable. Otherwise, continue.
    \State Decompose polynomials. Decompose~$f_1, f_2, \ldots, f_m$ using~$\epsilon$ and separate variables such that $f_{i}(x_1, \ldots, x_n) = g_{i1}(x_1, \ldots, x_k) + g_{i2}(x_{k+1}, \ldots, x_n)$ for $1 \le i \le m$ and $1 < k < n$. Go to step 1 and continue to decompose $g_{11}, g_{21},\ldots, g_{m1}$ and $g_{12}, g_{22}, \ldots, g_{m2}$ separately.
    \end{algorithmic}
\end{algorithm}

The compute center step needs to solve a system of linear equations, which is sparse and contains many redundant equations. The complexity analysis in~\cite{fang2023numerical} can be easily extended to multiple polynomials, which shows that this step has polynomial complexity. The compute idempotents step requires solving systems of nonlinear equations similar to the direct sum decomposition of one homogeneous polynomial proposed in~\cite{huang2021centres}, which may be computationally expensive. However, it only needs to solve a system of sparse quadratic equations if $f_1, f_2, \ldots, f_m$ are simultaneously diagonalizable. It is shown that the computation is performed efficiently in numerical experiments~\cite{fang2023numerical}. The decompose polynomials step takes the same change of variables for each polynomial, which can be executed efficiently in parallel.

\section{Some examples}\label{sec:example}

In this section, we provide three examples to demonstrate the performance of the proposed technique. Example~\ref{ex:1}, taken from \cite{dreesen2015decoupling}, shows the simultaneous diagonalization of two nonhomogeneous polynomials in two variables using centers. Example~\ref{ex:2} describes the simultaneous direct sum decomposition of two polynomials in four variables. Example~\ref{ex:3} includes three polynomials that are decomposable as direct sums separately, but not simultaneously decomposable. Note that while the three examples only include real polynomials, this technique works for polynomials over an arbitrary field.

\begin{example}\label{ex:1}
\emph{(Example~\ref{ex:center_multiple} continued)
Let $f_1(u_1, u_2)=54u^3_1 - 54u^2_1u_2 + 8u^2_1 + 18u_1u^2_2 + 16u_1u_2 - 2u^3_2 + 8u_2^2 + 8u_2 + 1$ and $f_2(u_1, u_2)=-27u^3_1 + 27u^2_1u_2 - 24u^2_1 - 9u_1u^2_2 - 48u_1u_2 - 15u_1 + u^3_2 - 24u_2^2 - 19u_2 - 3.$}

\emph{We have 
    \begin{align*}
    Z(f_1, f_2)=\left\{ 
    \left(
    \begin{array}{cc}
      a   &  \frac{1}{2}(b-a) \\
      \frac{3}{2}(b-a)   &    b
    \end{array}
    \right)
    \Big|
    a, b \in \k
     \right\}
      \end{align*}
    and the two base matrices of $Z(f_{1},f_{2})$ are
      \begin{align*}
   	X_{1} =	\left (
   	\begin{array}{cc}
   		1&-\frac{1}{2}\\
   		-\frac{3}{2}&0
   	\end{array}
   	\right),\quad
   	X_{2} =	\left (
   	\begin{array}{cc}
   		0&\frac{1}{2}\\
   		\frac{3}{2}&1
   	\end{array}
   	\right).
 \end{align*}
    In light of the structure of $Z(f_{1},f_{2})$, it is not hard to work out a pair of orthogonal idempotents
      \begin{align*}
   	\epsilon_{1} =\left (
   	\begin{array}{cc}
   		\frac{1}{4}&\frac{1}{4}\\
   		\frac{3}{4}&\frac{3}{4}
   	\end{array}
   	\right),\quad
   	\epsilon_{2} =	\left (
   	\begin{array}{cc}
   		\frac{3}{4}&-\frac{1}{4}\\
   		-\frac{3}{4}&\frac{1}{4}
   	\end{array}
   	\right)	.
  \end{align*}
}

\emph{According to the proof of Theorem~\ref{thm:idm}, since there exists an invertible matrix $P=\left (
   	\begin{array}{cc}
   		-\frac{1}{8} & \frac{1}{4} \\
   		-\frac{3}{8} & -\frac{1}{4}
   	\end{array}
   	\right)$
such that
  \begin{align*}
   	P^{-1}\epsilon_{1}P =	\left (
   	\begin{array}{cc}
   		1 & 0 \\
   		0 & 0
   	\end{array}
   	\right),\quad
   	P^{-1}\epsilon_{2}P =	\left (
   	\begin{array}{cc}
   		0 & 0 \\
   		0 & 1
   	\end{array}
   	\right),
  \end{align*}
we take the change of variables
\begin{equation*}
   u_{1} = -\frac{1}{8}x_{1} + \frac{1}{4}x_{2}, \quad u_{2} = -\frac{3}{8}x_{1} -\frac{1}{4}x_{2}.
\end{equation*}
Then $f_{1}$ and $f_{2}$ are simultaneously diagonalized as
\begin{align*}
   f_{1}\left(-\frac{1}{8}x_{1} + \frac{1}{4}x_{2}, -\frac{3}{8}x_{1} - \frac{1}{4}x_{2}\right) &= 2x_{1}^{2}-3x_{1}+2x_{2}^{3}-2x_{2}+1, \\
    f_{2}\left(-\frac{1}{8}x_{1} + \frac{1}{4}x_{2}, -\frac{3}{8}x_{1} - \frac{1}{4}x_{2}\right) &=-6x_{1}^{2}+9x_{1}-x_{2}^{3}+x_{2}-3.
\end{align*}
}
\end{example}

\begin{example}\label{ex:2}
\emph{Let $f_{1}(x_{1},x_{2},x_{3},x_{4})=x_{1}^{3}+3x_{1}^{2}x_{2}+3x_{1}^{2}x_{3}+3x_{1}x_{2}^{2}+6x_{1}x_{2}x_{3}+3x_{1}x_{3}^{2}+2x_{2}^{3}+6x_{2}x_{3}^{2}+x_{3}^{2}x_{4}+x_{4}^{2}+2x_{3}+1$ and $f_{2}(x_{1},x_{2},x_{3},x_{4})=2x_{1}^{3}+6x_{1}^{2}x_{2}+6x_{1}^{2}x_{3}+6x_{1}x_{2}^{2}+12x_{1}x_{2}x_{3}+6x_{1}x_{3}^{2}+5x_{2}^{3}-3x_{2}^{2}x_{3}+15x_{2}x_{3}^{2}-x_{3}^{3}+x_{3}x_{4}^{2}+3x_{4}$.}

\emph{We calculate the Hessian matrices of $f_1$ and $f_2$ and compute their center by Equation~\eqref{eqn:center} as
  \begin{align*}
Z(f_1, f_2)=\left\{ 
    \left(
    \begin{array}{cccc}
      a+b+c & a+2c & a & 0 \\
     0 & b-c & c& 0\\
     0 & 0 & b & 0\\
     0 & 0 & 0 & b
    \end{array}
    \right)
    \Bigg|
    a, b, c \in \k
     \right\}.
  \end{align*}
The three base matrices of $Z(f_1, f_2)$ can be obtained as follows
  \begin{align*}
   	X_{1} =	\left (
   	\begin{array}{cccc}
   		1 & 0 & 0 & 0 \\
     0 & 1 & 0 & 0\\
     0 & 0 & 1 & 0\\
     0 & 0 & 0 & 1
   	\end{array}
   	\right),\quad
   	X_{2} =	\left (
   	\begin{array}{cccc}
     1 & 1 & 1 & 0 \\
     0 & 0 & 0 & 0\\
     0 & 0 & 0 & 0\\
     0 & 0 & 0 & 0
   	\end{array}
   	\right),\quad
   	X_{3} =	\left (
   	\begin{array}{cccc}
     1 & 2 & 0 & 0 \\
     0 & -1 & 1 & 0\\
     0 & 0 & 0 & 0\\
     0 & 0 & 0 & 0
   	\end{array}
   	\right).
      \end{align*}
Then we have the following orthogonal idempotents
  \begin{align*}
   	\epsilon_{1} =	\left (
   	\begin{array}{cccc}
   		1 & 1 & 1 & 0 \\
     0 & 0 & 0 & 0\\
     0 & 0 & 0 & 0\\
     0 & 0 & 0 & 0
   	\end{array}
   	\right),\quad
   	\epsilon_{2} =	\left (
   \begin{array}{cccc}
     0 & -1 & 1 & 0 \\
     0 & 1 & -1 & 0\\
     0 & 0 & 0 & 0\\
     0 & 0 & 0 & 0
   	\end{array}
   	\right),\quad
   	\epsilon_{3} =	\left (
   \begin{array}{cccc}
     0 & 0 & -2 & 0 \\
     0 & 0 & 1 & 0\\
     0 & 0 & 1 & 0\\
     0 & 0 & 0 & 1
   	\end{array}
   	\right)
  \end{align*}
and 
  \begin{align*}
   	P^{-1}\epsilon_{1}P =	\left (
   	\begin{array}{cccc}
   		1 & 0 & 0 & 0 \\
     0 & 0 & 0 & 0\\
     0 & 0 & 0 & 0\\
     0 & 0 & 0 & 0
   	\end{array}
   	\right),\quad
   	P^{-1}\epsilon_{2}P =	\left (
   \begin{array}{cccc}
     0 & 0 & 0 & 0 \\
     0 & 1 & 0 & 0\\
     0 & 0 & 0 & 0\\
     0 & 0 & 0 & 0
   	\end{array}
   	\right),\quad
   	P^{-1}\epsilon_{3}P =	\left (
   \begin{array}{cccc}
     0 & 0 & 0 & 0 \\
     0 & 0 & 0 & 0\\
     0 & 0 & 1 & 0\\
     0 & 0 & 0 & 1
   	\end{array}
   	\right),
  \end{align*}
where
  \begin{align*}
   	P =	 \left (
   \begin{array}{cccc}
     1 & -1 & -2 & 0 \\
     0 & 1 & 1 & 0\\
     0 & 0 & 1 & 0\\
     0 & 0 & 0 & 1
   	\end{array}
   	\right).
  \end{align*}
Take the change of variables
\begin{equation*}
   x_{1} = y_{1} - y_{2} - 2y_{3}, \quad x_{2} = y_{2} + y_{3},\quad x_{3}=y_{3},\quad x_{4}=y_{4}
\end{equation*}
and then $f_{1}$ and $f_{2}$ are simultaneously decomposed as
\begin{align*}
    f_{1}\left(y_{1}-y_{2}-2y_{3},y_{2}+y_{3},y_{3},y_{4}\right) &= y_{1}^{3}+y_{2}^{3}+y_{3}^{2}y_{4}+y_{4}^{2}+2y_{3}+1, \\
    f_{2}\left(y_{1}-y_{2}-2y_{3},y_{2}+y_{3},y_{3},y_{4}\right) &=2y_{1}^{3}+3y_{2}^{3}+y_{3}y_{4}^{2}+3y_{4}.
\end{align*}
}
\end{example}

\begin{example}\label{ex:3}
\emph{Let $f_{1}(x_{1},x_{2},x_{3})=81x_{1}^{4}+108x_{1}^{3}x_{3}+54x_{1}^{2}x_{3}^{2}+12x_{1}x_{3}^{3}+x_{3}^{4}+x_{2}^{3}+x_{3}^{3}+x_{2}x_{3}^{2}+2x_{2}^{2}+5x_{3}+1$,\quad $f_{2}(x_{1},x_{2},x_{3})=x_{1}^{3}-6x_{1}^{2}x_{2}+12x_{1}x_{2}^{2}-7x_{2}^{3}+3x_{2}x_{3}+7x_{1}+5$ and $f_{3}(x_{1},x_{2},x_{3})=27x_{2}^{3}+36x_{2}x_{3}^{2}-54x_{2}^{2}x_{3}-8x_{3}^{3}+5x_{1}^{3}+2x_{1}^{2}+7x_{1}+12x_{2}-8x_{3}+5$. We firstly compute the direct sum decomposition of $f_1$, $f_2$ and $f_3$, separately. We can calculate the Hessian matrix of $f_1$ as
\begin{align*}
    H_{1}=\left(
    \begin{array}{ccc} 		972x_{1}^{2}+648x_{1}x_{3}+108x_{3}^{2} & 0 &324x_{1}^{2}+216x_{1}x_{3}+36x_{3}^{2} \\
    		0 & 6x_{2}+4 &2x_{3} \\  324x_{1}^{2}+216x_{1}x_{3}+36x_{3}^{2} & 2x_{3} &108x_{1}^{2}+72x_{1}x_{3}+12x_{3}^{2}+6x_{3}+2x_{2} 
    	\end{array}
    	\right).
      \end{align*}
By Equation~\eqref{eqn:center}, we have
 \begin{align*}
 Z(f_1)=\left\{ 
    \left(
    \begin{array}{ccc}
      a+3b & 0 & b  \\
     0 & a &  0\\
     0 & 0 & a
    \end{array}
    \right)
    \Bigg|
    a, b \in \k
     \right\}
     \end{align*}
   and the two base matrices of $Z(f_1)$ are
\begin{align*}
   	X_{1} =	\left (
   	\begin{array}{ccc}
   		1 & 0 & 0 \\
     0 & 1 & 0\\
     0 & 0 & 1
   	\end{array}
   	\right),\quad
   	X_{2} =	\left (
   	\begin{array}{ccc}
    3 & 0 & 1\\
     0 & 0 & 0\\
     0 & 0 & 0 
   	\end{array}
   	\right).
\end{align*}
    We then compute a pair of orthogonal idempotents
\begin{align*}
   	\epsilon_{1} =	\left (
   \begin{array}{ccc}
    	1 & 0 & \frac{1}{3} \\
     0 & 0 & 0\\
     0 & 0 & 0 
   	\end{array}
   	\right),\quad
   	\epsilon_{2} =	\left (
   \begin{array}{ccc}
   		0 & 0 & -\frac{1}{3} \\
     0 & 1 & 0\\
     0 & 0 & 1 
   	\end{array}
   	\right)
\end{align*}
and
\begin{align*}
   	P^{-1}\epsilon_{1}P =	\left (
   \begin{array}{ccc}
     1 & 0 & 0 \\
     0 & 0 & 0\\
     0 & 0 & 0 
   	\end{array}
   	\right),\quad
   	P^{-1}\epsilon_{2}P =	\left (
   \begin{array}{ccc}
     0 & 0 & 0 \\
     0 & 1 & 0\\
     0 & 0 & 1 
   	\end{array}
   	\right),
\end{align*}
where
\begin{align*}
   	P =	\left (
   \begin{array}{ccc}
     \frac{1}{3} & 0 & -\frac{1}{3} \\
     0 & 1 & 0\\
     0 & 0 & 1 
   	\end{array}
   	\right).
\end{align*}
}
   
\emph{Take the change of variables
\begin{equation*}
   x_{1} = \frac{1}{3}y_{1} - \frac{1}{3}y_{3}, \quad x_{2} = y_{2} ,\quad x_{3} = y_{3},
\end{equation*}
then $f_{1}$ is decomposed as
\begin{equation*}
   f_{1}\left(\frac{1}{3}y_{1} - \frac{1}{3}y_{3},y_{2},y_{3}\right) = y_{1}^{4}+y_{2}^{3}+y_{3}^{3}+y_{2}y_{3}^{2}+2y_{2}^{2}+5y_{3}+1.
\end{equation*}
We can calculate the Hessian matrix of $f_2$ as
\begin{equation*}
    H_{2}=\left(
    \begin{array}{ccc}
    		6x_{1}-12x_{2} & -12x_{1}+24x_{2} &0 \\
    		-12x_{1}+24x_{2} & 24x_{1}-42x_{2} &3 \\
             0 & 3 &0 
    	\end{array}
    	\right).
\end{equation*}
By Equation~\eqref{eqn:center}, we have
 \begin{align*}
 Z(f_2)=\left\{ 
    \left(
    \begin{array}{ccc}
      a-\frac{1}{2}b & b & 0  \\
     0 & a &  0\\
     0 & c & a
    \end{array}
    \right)
    \Bigg|
    a,b,c \in \k
     \right\}
     \end{align*}
   and the three base matrices of $Z(f_2)$ can be obtained as follows
\begin{align*}
   	X_{3} =	\left (
   	\begin{array}{ccc}
   		1 & 0 & 0 \\
     0 & 1 & 0\\
     0 & 0 & 1
   	\end{array}
   	\right),\quad
   	X_{4} =	\left (
   	\begin{array}{ccc}
    	-\frac{1}{2} & 1 & 0 \\
     0 & 0 & 0\\
     0 & 0 & 0 
   	\end{array}
   	\right),\quad
   	X_{5} =	\left (
   	\begin{array}{ccc}
    	0 & 0 & 0 \\
     0 & 0 & 0\\
     0 &1 & 0 
   	\end{array}
   	\right).
 \end{align*}
    The corresponding orthogonal idempotents are calculated as
\begin{align*}
   	\epsilon_{3} =	\left (
   \begin{array}{ccc}
    	1 & -2 & 0 \\
     0 & 0 & 0\\
     0 & 0 & 0 
   	\end{array}
   	\right),\quad
   	\epsilon_{4} =	\left (
   \begin{array}{ccc}
   		0 & 2 & 0 \\
     0 & 1 & 0\\
     0 & 0 & 1 
   	\end{array}
   	\right)
\end{align*}
and
\begin{align*}
   	P^{-1}\epsilon_{3}P =	\left (
   \begin{array}{ccc}
    	1 & 0 & 0 \\
     0 & 0 & 0\\
     0 & 0 & 0 
   	\end{array}
   	\right),\quad
   	P^{-1}\epsilon_{4}P =	\left (
   \begin{array}{ccc}
   		0 & 0 & 0 \\
     0 & 1 & 0\\
     0 & 0 & 1 
   	\end{array}
   	\right),
\end{align*}
where
\begin{align*}
   	P = \left (
   \begin{array}{ccc}
   		1 & 2 & 0 \\
     0 & 1 & 0\\
     0 & 0 & 1 
   	\end{array}
   	\right).
\end{align*}
}
   
 \emph{According to the proof of Theorem~\ref{thm:idm}, take the change of variables
\begin{equation*}
   x_{1} = z_{1}+2z_{2}, \quad x_{2} = z_{2} ,\quad x_{3}= z_{3}.
\end{equation*}
Then $f_{2}$ is decomposed as
\begin{equation*}
   f_{2}(z_{1}+2z_{2},z_{2},z_{3}) = z_{1}^{3}+z_{2}^{3}+3z_{2}z_{3}+7z_{1}+14z_{2}+5.
\end{equation*}
We calculate the Hessian matrix of $f_3$ as
\begin{align*}
    H_{3}=\left(
    \begin{array}{ccc}
    		30x_{1}+4 & 0 &0 \\
    		0 & 162x_{2}-108x_{3} & -108x_{2}+72x_{3} \\
             0 & -108x_{2}+72x_{3} & 72x_{2}-48x_{3} 
    	\end{array}
    	\right).
\end{align*}
By Equation~\eqref{eqn:center}, we have
\begin{align*}
 Z(f_3)=\left\{ 
    \left(
    \begin{array}{ccc}
      a & 0 & 0  \\
     e & d &  b\\
     \frac{3}{2}e & c & d-\frac{2}{3}c+\frac{3}{2}b
    \end{array}
    \right)
    \Bigg|
    a, b,c,d,e \in \k
     \right\}
     \end{align*}
     A pair of orthogonal idempotents are determined using $Z(f_{3})$ as
\begin{align*}
   	\epsilon_{5} =	\left (
   \begin{array}{ccc}
    	1 & 0 & 0 \\
     0 & 0 & \frac{2}{3}\\
     0 & 0 & 1 
   	\end{array}
   	\right),\quad
   	\epsilon_{6} =	\left (
   \begin{array}{ccc}
   		0 & 0 & 0 \\
     0 & 1 & -\frac{2}{3}\\
     0 & 0 & 0 
   	\end{array}
   	\right).
\end{align*}
and
\begin{align*}
   	P^{-1}\epsilon_{5}P =	\left (
   \begin{array}{ccc}
     1 & 0 & 0 \\
     0 & 0 & 0 \\
     0 & 0 & 1 
   	\end{array}
   	\right),\quad
   	P^{-1}\epsilon_{6}P =	\left (
   \begin{array}{ccc}
     0 & 0 & 0 \\
     0 & 1 & 0 \\
     0 & 0 & 0 
   	\end{array}
   	\right),
\end{align*}
where
\begin{align*}
   	P =	\left (
   \begin{array}{ccc}
   		1 & 0 & 0 \\
     0 & \frac{1}{3} & \frac{2}{3}\\
     0 & 0 & 1 
   	\end{array}
   	\right).
\end{align*}
   }
\emph{According to the proof of Theorem~\ref{thm:idm}, take the change of variables
\begin{equation*}
   x_{1} = u_{1}, \quad x_{2} = \frac{1}{3}u_{2}+\frac{2}{3}u_{3} ,\quad x_{3}=u_{3},
\end{equation*}
then $f_{3}$ is decomposed as
\begin{equation*}
   f_{3}\left(u_{1},\frac{1}{3}u_{2}+\frac{2}{3}u_{3},u_{3}\right) = 5u_{1}^{3}+u_{2}^{3}+2u_{1}^{2}+4u_{2}+7u_{1}+5.
\end{equation*}
}

\emph{Each of $f_1$, $f_2$ and $f_3$ is decomposable as a direct sum individually. However, if we calculate their Hessian matrices and center by Equation~\eqref{eqn:center}, we have
 \begin{align*}
 Z(f_{1},f_{2},f_{3})=\left\{ 
    \left(
    \begin{array}{ccc}
     a & 0 & 0  \\
     0 & a &  0\\
     0 & 0 & a
    \end{array}
    \right)
    \Bigg|
    a\in \k
     \right\},
     \end{align*}
which does not contain any nontrivial idempotent elements. Therefore, $f_{1},f_{2},f_{3} $ are not simultaneously decomposable as direct sums.}
\end{example}


\section*{Declaration of interests}
The authors declare that they have no known competing financial interests or personal relationships that could have appeared to influence the work reported in this paper.

\bibliographystyle{plainnat}




\end{document}